\definecolor{electricultramarine}{rgb}{0.25, 0.0, 1.0}
\title{Uniform cross-$t$-intersecting families: proving Hirschorn's conjecture up to polynomial factor\thanks{Some results of this paper in a preliminary form were obtained by the second and the third author in~\cite{kozachinskiy2019application}.}} 
\author[2]{Georgii P. Bulgakov}
\author[1]{Alexander Kozachinskiy}
\author[2,3,4]{Mikhail N. Vyalyi}
\affil[1]{University of Warwick, Coventry, UK}
\affil[2]{National Research University Higher School of Economics, Moscow, Russian Federation}
\affil[3]{Moscow Institute of Physics and Technology, Russian Federation}
\affil[4]{Dorodnicyn Computing
  Centre, FRC CSC RAS, Moscow, Russian Federation}
\let\leq\leqslant
\let\geq\geqslant
\let\le\leq
\let\ge\geq
\def\sm{\setminus}
\let\epsilon\varepsilon
\let\phi\varphi
\def\X{{\mathcal X}}
\def\Y{{\mathcal Y}}
\def\F{{\mathcal F}}
\def\G{{\mathcal G}}
\def\X{\ensuremath{{\mathcal X}}}
\def\A{{\mathcal A}}
\def\B{{\mathcal B}}
\def\sm{\setminus}
\newtheorem*{known}{Theorem}
\newtheorem{theorem}{Theorem}
\newtheorem{lemma}[theorem]{Lemma}
\newtheorem{proposition}[theorem]{Proposition}
\newtheorem{corollary}[theorem]{Corollary}
\newtheorem{conj}[theorem]{Conjecture}
\begin{document}
\maketitle

\begin{abstract}
We consider a problem of maximizing the product of the sizes of two uniform cross-$t$-intersecting families of sets. We show that the value of this maximum is at most polynomially larger (in the size of a ground set) than a quantity conjectured by  Hirschorn~\cite{hirschorn2008asymptotic}. At the same time, we observe that it can be strictly bigger. 
\end{abstract}

\section{Introduction}

Numerous works in combinatorics are devoted to extremal problems for families of sets with certain restrictions on their intersections. In this paper we study this kind of problems for \emph{uniform cross-$t$-intersecting} families.

Let $n$ denote the size of a ground set $[n] = \{1, 2, \ldots, n\}$. By a ``family'' we always mean a set $\F$ such that any element of $\F$ is a subset of the ground set $[n]$.
A family $\F$ is called \emph{$m$-uniform} if $|F|=m$ for any set $F$ in the
family. The set of all $m$-element subsets of $[n]$ is denoted by
$\binom{[n]}{m}$. Two families $\F$ and $\G$ are called \emph{cross-$t$-intersecting} if $|F \cap G| \ge t$ for all $F\in\F, G\in \G$.

Let $h$ be some function mapping pairs of families to real numbers. We will refer to such functions as to \emph{functionals}. For a functional $h$ and for 
integers
$a, b$ and $t$ one can consider the following optimization problem:
\begin{subequations}
\begin{alignat}{2}
&\!\max      &\qquad& h(\F, \G) \label{eq:optProb}\\
&\text{subject to} &      & \mbox{$\F$ is an $a$-uniform family},\label{eq:constraint1}\\
&                  &      &  \mbox{$\F$ is a $b$-uniform family},\label{eq:constraint2} \\
&                  &      &  \mbox{$\F$ and $\G$ are cross-$t$-intersecting}.\label{eq:constraint3}
\end{alignat}
\end{subequations}

We will denote the maximum in (\ref{eq:optProb}--\ref{eq:constraint3}) by $N_h(n, a, b, t)$. Two most studied functionals are the product and the sum functionals:
$$h_{prod}(\F, \G) = |\F| \cdot |\G|, \qquad h_{sum}(\F, \G) = |\F| + |\G|.$$
For these two functionals we will use, respectively, the following notation:
$$N_{prod}(n, a, b, t), \qquad N_{sum}(n, a, b, t).$$

The starting point  in this line of research was the famous Erd\H{o}s-Ko-Rado
theorem. It only considers families that are cross-$t$-intersecting with themselves. Such families are simply called \emph{$t$-intersecting} (formally, a family $\F$ is  $t$-intersecting if a pair $(\F, \F)$ is cross-$t$-intersecting). 

\begin{known}[Erd\H{o}s, Ko, Rado, \cite{erdos1961intersection}] There exists a function $n_0(m, t)$ such that for all $1\leq t\leq m$ and $n\geq n_0(m,t)$ the maximal size of a
$t$-intersecting $m$-uniform family is $\binom{n-t}{m-t}$. 
\end{known}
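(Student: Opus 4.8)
The plan is to prove matching lower and upper bounds, with the star family as the extremal configuration. For the lower bound, fix any $t$-element set $T \subseteq [n]$ and take the \emph{star} $\F_T = \{F \in \binom{[n]}{m} : T \subseteq F\}$. Any two of its members share $T$, so $\F_T$ is $t$-intersecting, and $|\F_T| = \binom{n-t}{m-t}$. The whole difficulty lies in the upper bound: showing that no $t$-intersecting $m$-uniform family can beat the star once $n$ is large.

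For the upper bound I would use the compression (shifting) method. For $1 \le i < j \le n$ define the shift $S_{ij}$ that, for each $F \in \F$ with $j \in F$ and $i \notin F$, replaces $F$ by $(F \setminus \{j\}) \cup \{i\}$ whenever the latter is not already present. First I would verify the three standard invariants: $S_{ij}(\F)$ is again $m$-uniform, has the same cardinality as $\F$, and --- the only nonroutine of the three --- remains $t$-intersecting. Since each genuine shift strictly decreases $\sum_{F \in \F}\sum_{x \in F} x$, iterating the shifts terminates and produces a family $\F^*$ with $|\F^*| = |\F|$ that is $t$-intersecting and \emph{shifted} (fixed by every $S_{ij}$). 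It therefore suffices to bound $|\F^*|$.

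The technical heart is a structural lemma for shifted families (due to Frankl): if $\F^*$ is shifted and $t$-intersecting, then in fact $|A \cap B \cap [2m-t]| \ge t$ for all $A, B \in \F^*$, i.e. every pairwise intersection is already witnessed inside the bounded window $[2m-t]$. Granting this, the traces $A \cap [2m-t]$ form a $t$-intersecting family on the ground set $[2m-t]$, and I would split into two cases. If some trace $T_0$ has size exactly $t$, then every other trace $T$ satisfies $|T \cap T_0| \ge t = |T_0|$, forcing $T_0 \subseteq T$ and hence $T_0 \subseteq A$ for every $A \in \F^*$; thus $\F^*$ lies in the star of $T_0$ and $|\F^*| \le \binom{n-t}{m-t}$ exactly. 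Otherwise every trace has size at least $t+1$, so grouping the sets of $\F^*$ by their trace gives $|\F^*| \le \sum_{s \ge t+1} \binom{2m-t}{s}\binom{n-(2m-t)}{m-s} \le 2^{2m-t}\binom{n-(2m-t)}{m-t-1} = O(n^{m-t-1})$, which is smaller than $\binom{n-t}{m-t} = \Theta(n^{m-t})$ once $n$ exceeds a threshold $n_0(m,t)$.

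I expect the main obstacle to be the structural lemma, whose proof is the delicate part: one argues by contradiction, using shiftedness to push the elements of two badly-intersecting sets down into $[2m-t]$ and thereby manufacture two members of $\F^*$ with fewer than $t$ common elements. The verification that shifts preserve the $t$-intersecting property is the other place requiring care, via a short case analysis on which set of a given pair was moved. By contrast, the final two-case count is robust: it only needs $n_0(m,t)$ chosen large enough that the $\Theta(n^{m-t})$ star term dominates the $O(n^{m-t-1})$ bound coming from the non-star case.
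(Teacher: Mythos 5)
There is nothing in the paper to compare against here: the statement is quoted as a known theorem of Erd\H{o}s, Ko and Rado, cited to \cite{erdos1961intersection}, and the paper gives no proof of it (it serves only as background for the cross-intersecting results that follow). Judged on its own merits, your proposal is the standard modern proof of the large-$n$ Erd\H{o}s--Ko--Rado theorem and its logic is sound: the star gives the lower bound; shifting preserves uniformity, cardinality and the $t$-intersecting property and terminates by the strictly decreasing weight $\sum_{F}\sum_{x\in F}x$; Frankl's structural lemma ($|A\cap B\cap[2m-t]|\ge t$ for shifted $t$-intersecting families) is a true statement with exactly the push-down proof you sketch; and the two-case trace count is correct --- in the first case the family lies in a star, and in the second case every trace has size at least $t+1$, so $|\mathcal{F}^*|=O_{m,t}(n^{m-t-1})$, which loses to $\binom{n-t}{m-t}=\Theta_{m,t}(n^{m-t})$ once $n\ge n_0(m,t)$. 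The two ingredients you leave as sketches (preservation of $t$-intersection under shifts, and the structural lemma) are genuinely the delicate points, but both are standard and your outlines of their proofs are the right ones, so the gaps are fillable rather than fatal. It is worth noting that your route differs from the original 1961 argument (Frankl's lemma postdates it), but it is exactly in the spirit of this paper: the weight-minimizing compression you use is the same device the authors use in their Lemma \ref{maximal_compressed}, and your structural lemma plays the role that their Lemma \ref{eq_border} plus the closing compression argument plays in the proof of Theorem \ref{th:main}.
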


Thus, the maximum in this theorem is attained on a family
$$\left\{ X\in\binom{[n]}{m}: [t]\subseteq X\right\}.$$
Still, Erd\H{o}s-Ko-Rado
theorem is applicable only when $n$ is sufficiently large with respect to $m$ and $t$. The general case was addressed by
Ahlswede and Khachatrian~\cite{AKh97}. They proved that for all $n, m, t$ the maximal size of a 
$t$-intersecting $m$-uniform family  is attained on some family of the form
\[ 
\left\{ X\in\binom{[n]}{m}: |X\cap[t+2i]|\geq t+i
\right\}, \quad 0\leq i\leq \frac{n-t}2\,.
\]

Matsumoto and Tokushige~\cite{MATSUMOTO198990} considered  an analogue of the  Erd\H{o}s-Ko-Rado
theorem for cross-$t$-intersecting pairs of  families. More specifically, they studied the case $t = 1$.  They proved that for $n\ge 2a, n\ge 2b$ we have:
\[
 N_{prod}(n,a,b,1) = \binom{n-1}{a-1}   \binom{n-1}{b-1},
\]
 and thus this quantity is attained on a pair:
$$\F = \left\{F\in\binom{[n]}{a} : 1 \in F\right\}, \qquad \G = \left\{G\in\binom{[n]}{b} : 1 \in G\right\}.$$
Finally, they observed that this pair is not optimal for $b = a - 1, n = 2a - 1$ so that their conditions are sharp.

In~\cite{TOKUSHIGE20101167} Tokushige extended these results from cross-$1$-intersecting families to cross-$t$-intersecting families (but only for the case $a = b$). He proved that for all $0 < p < 0.11, 1 \le t\le 2/p$ there exists $n_0$ such that for all $n> n_0$ and $a/n = p$ we have:
\[
 N_{prod}(n,a,a,t) = \binom{n-t}{a-t}^2.
\]

This shows for a wide range of parameters the same family which establishes a maximum in the Erd\H{o}s-Ko-Rado theorem simultaneously establishes a maximum for  $N_{prod}(n,a, a, t)$.

In~\cite{hirschorn2008asymptotic} Hirschorn conjectured that for some function $n_0(a, b, t)$ we have:
\[
 N_{prod}(n,a,b,t) = \binom{n-t}{a-t}   \binom{n-t}{b-t}
\]
as long as $n>n_0(a,b,t)$. This conjecture was later proved by Borg~\cite{Borg2014TheMP}. As in the Erd\H{o}s-Ko-Rado theorem, we only have this equality for the regime when $n$ is much larger than all the other parameters. As Hirshorn observed, this equality is no longer true in the other settings. In~\cite{hirschorn2008asymptotic} he made one more conjecture, now for all possible values of $n, a, b, t$.

\begin{conj}[\cite{hirschorn2008asymptotic}]\label{Hirschorn} The quantity
  $N_{prod}(n,a, b, t)$ is attained on some pair of families of the form
$$\F = \left\{F \in\binom{[n]}{a} : |F\cap [s]| \ge u\}\right\}, \,\, \G = \left\{G \in\binom{[n]}{b} : |G\cap [s]| \ge v\}\right\},$$ 
where $u, v, s\in [n]$ are such that $u + v = s + t$.
\end{conj}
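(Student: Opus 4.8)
The plan is to adapt the compression-and-pushing strategy behind the Ahlswede--Khachatrian complete intersection theorem to the cross-$t$-intersecting, product setting; the families in the statement are the natural two-family generalization of the Frankl families $\{X:|X\cap[t+2i]|\ge t+i\}$ appearing there.

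\emph{Reduction to shifted families.} For $i<j$ let $S_{ij}$ be the usual shift that, in every set containing $j$ but not $i$, replaces $j$ by $i$ whenever the resulting set is not already present. Applying the \emph{same} operator $S_{ij}$ simultaneously to $\F$ and to $\G$ preserves uniformity, leaves $|\F|$ and $|\G|$ (hence the product) unchanged, and preserves cross-$t$-intersection: if some shifted pair $F',G'$ had $|F'\cap G'|<t$, tracing $F',G'$ back through $S_{ij}$ would produce an original pair violating the hypothesis. Iterating over all $i<j$, I may assume both $\F$ and $\G$ are shifted.

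\emph{The constraint and the structure.} The constraint $u+v=s+t$ is the easy part. For $\F=\{F:|F\cap[s]|\ge u\}$ and $\G=\{G:|G\cap[s]|\ge v\}$ one has $|F\cap G|\ge|F\cap G\cap[s]|\ge u+v-s$, so the pair is cross-$t$-intersecting exactly when $u+v\ge s+t$; and if $u+v>s+t$ one may lower, say, $u$ by one, which enlarges $\F$, leaves $\G$ fixed, and keeps the pair cross-$t$-intersecting, so among threshold pairs the product is maximized on the boundary $u+v=s+t$. It then remains to prove the structural claim that some shifted maximizer of $|\F|\cdot|\G|$ is a pair of threshold families on a common prefix $[s]$. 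For this I would pass to the generating (minimal-set) description of shifted families and attempt a local pushing move at the largest coordinate $j$ at which the pair is not yet of threshold type, redistributing the sets of $\F$ (and of $\G$) according to whether they contain $j$ so as to move the boundary toward an initial-segment threshold, without decreasing the product and without destroying cross-$t$-intersection.

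\emph{The main obstacle} is exactly this pushing step. For a single family, or for the sum functional, the quantity being balanced is linear, so a pushing-pulling move can always be arranged without loss; but $|\F|\cdot|\G|$ is bilinear, so a move that enlarges $\F$ generically forces $\G$ to shrink in order to maintain cross-$t$-intersection, and the sign of the net change in the product depends on the current ratio $|\F|/|\G|$. Simultaneously controlling this trade-off for both families, and ruling out ``mixed'' shifted optima whose two minimal-set systems sit on different prefixes or interleave, is the step I do not expect to close in full generality. Indeed, this bilinear obstruction suggests that for some parameter regimes a non-threshold shifted pair might strictly beat every threshold pair with $u+v=s+t$, in which case the conjecture as stated would be too strong and only a weaker, approximate version could survive.
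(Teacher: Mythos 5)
The statement you are trying to prove is a conjecture, and this paper in fact \emph{disproves} it in the exact form stated, so no completion of your pushing step exists. Your own closing suspicion is precisely right: the bilinear obstruction is real. Proposition \ref{inf_counter} of the paper exhibits an infinite family of counterexamples with $a=2$, $b=n-2$, $t=1$. There, $F\in\binom{[n]}{2}$ and $G\in\binom{[n]}{n-2}$ cross-$1$-intersect unless $G=\overline{F}$, so the optimum is obtained by splitting $\binom{[n]}{2}$ into two equal halves and complementing one half, giving $N_{prod}=\bigl(\tfrac12\binom{n}{2}\bigr)^2$ whenever $\binom{n}{2}$ is even; but for $n\equiv 8\pmod{12}$ no Hirschorn pair splits $\binom{[n]}{2}$ evenly, since equality of the two sizes reduces to $2s^2-(4n-2)s+n^2-n=0$, which modulo $3$ forces $s^2\equiv 2\pmod 3$ (using $n\equiv 2\pmod 3$), impossible. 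The paper also reports non-threshold pairs $\A_k,\B_k$ beating every Hirschorn pair for $a=2k+1$, $b=2k+2$, $n=4k+3$, $t=2$, $3\le k\le 50$ --- exactly the kind of ``mixed'' shifted optimum you feared you could not rule out.

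Your first step is sound and coincides with the paper's machinery: simultaneous shifting preserves sizes and the cross-$t$-intersecting property (Lemma \ref{maximal_compressed}, via \cite[Lemma 2.1]{Borg2014TheMP}), and your boundary observation $u+v=s+t$ is correct. What the paper proves in place of the exact conjecture is Theorem \ref{th:main}: after compression one can arrange that every pair $(F,G)\in\F^*\times\G^*$ satisfies $|F\cap[s]|+|G\cap[s]|\ge s+t$ for \emph{some} $s$ depending on the pair, i.e.\ $\F^*\times\G^*$ is covered by the union of the at most $n^2$ Hirschorn products rather than contained in a single one. The key technical tool is Lemma \ref{eq_border}, which recharacterizes this covering condition as $m(\overline{G},i)>m(F,t+i-1)$ for some $i$, and a minimal-weight-counterexample argument using a single left compression $\delta_{uv}$ on $F$. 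This covering statement is exactly as much as the compression method delivers, and it yields Conjecture \ref{Hirschorn} only up to the polynomial factor $n^2$ (Corollary \ref{sa}) --- the approximate version you anticipated is the true theorem, not a fallback.
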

Conjecture \ref{Hirschorn} can be seen as a generalization of Ahlswede-Khachatrian theorem.
We will  refer to the pairs of families from this conjecture as to \emph{Hirschorn pairs}. 

In the literature there are conditional results that are proved assuming Conjecture \ref{Hirschorn}. For instance, Zvonarev and Raigorodskii~\cite[Theorem 3]{ZR15} derive from Conjecture \ref{Hirschorn} an improvement of the classical results of Frankl and R{\"o}dl~\cite{frankl1987forbidden} on uniform families of sets with forbidden intersections.

Finally, let us mention that some theorems of Erd\H{o}s-Ko-Rado type were proved for tuples of (more than two) pairwise cross-$t$-intersecting families, and not only the product functional but also for the sum functional~\cite{WANG2011455, BORG2014117}.

\subsection{Main result}

The main contribution of this paper is the following result, motivated by Conjecture \ref{Hirschorn}.
\begin{theorem}
\label{th:main}
Let $\F$ be an $a$-uniform family and $\G$ be a $b$-uniform family. Assume that $\F$ and $\G$ are cross-$t$-intersecting. Then there exists an $a$-uniform family $\F^*$ with $|\F^*| = |\F|$ and a $b$-uniform family $\G^*$ with $|\G^*| = |\G|$ such that 
$$\forall (F, G)\in \F^*\times \G^*\,\, \exists s\in [n]  \mbox{ such that } |F\cap[s]| + |G\cap[s]| \ge s + t.$$
\end{theorem}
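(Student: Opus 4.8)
The plan is to prove the statement by the compression (shifting) method. For $1\le i<j\le n$ let $S_{ij}$ denote the usual shift operator, which replaces a set $X$ of a family by $(X\sm\{j\})\cup\{i\}$ whenever $j\in X$, $i\notin X$, and this new set is not already present. First I would apply $S_{ij}$ \emph{simultaneously} to both $\F$ and $\G$, ranging over all pairs $i<j$ in repeated rounds until no shift changes either family. Every such step preserves $a$- and $b$-uniformity and the cardinalities $|\F|,|\G|$, and it is a standard fact that applying the same shift to both families preserves the cross-$t$-intersecting property. Since the combined weight $\sum_{F\in\F}\sum_{x\in F}x+\sum_{G\in\G}\sum_{x\in G}x$ strictly decreases at every nontrivial step, the process terminates. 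I would take $\F^*,\G^*$ to be the resulting families: they have the required sizes, are $a$- and $b$-uniform, are cross-$t$-intersecting, and are \emph{left-compressed}, i.e.\ closed under single-element left-shifts. The theorem then reduces to the claim that for left-compressed cross-$t$-intersecting $\F^*,\G^*$ every pair $(F,G)\in\F^*\times\G^*$ admits an $s$ with $|F\cap[s]|+|G\cap[s]|\ge s+t$.

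To prove this claim I would argue by contradiction. For $F=\{f_1<\dots<f_a\}$ write $F'\preceq F$ when the down-shifted set $F'=\{f_1'<\dots<f_a'\}$ satisfies $f_i'\le f_i$ for all $i$, equivalently $|F'\cap[s]|\ge|F\cap[s]|$ for every $s$. The basic property of left-compressed families is that $F\in\F^*$ and $F'\preceq F$ force $F'\in\F^*$ — one reaches $F'$ from $F$ by a sequence of single left-shifts, each staying inside $\F^*$ — and similarly for $\G^*$. Suppose now that for some $F\in\F^*,G\in\G^*$ we had $|F\cap[s]|+|G\cap[s]|\le s+t-1$ for all $s\in[n]$. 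I would then construct down-shifts $F'\preceq F$ and $G'\preceq G$, of sizes $a$ and $b$, with $|F'\cap G'|\le t-1$. Since then $F'\in\F^*$ and $G'\in\G^*$, this contradicts the cross-$t$-intersecting property, proving the claim and hence the theorem.

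It remains to produce such low-overlap down-shifts, and this is the technical heart of the argument. I would view the task as placing $a$ ``$F$-tokens'' and $b$ ``$G$-tokens'' into the slots $1,\dots,n$, at most one token of each type per slot, where the $i$-th $F$-token is confined to slots $\le f_i$ and the $j$-th $G$-token to slots $\le g_j$ (this encodes exactly $F'\preceq F$ and $G'\preceq G$); then $|F'\cap G'|$ equals the number of doubly occupied slots. The minimum number of such slots is $\max_s\bigl(|F\cap[s]|+|G\cap[s]|-s\bigr)^+$: the lower bound is immediate, since the $s$ slots of $[s]$ must absorb at least $|F\cap[s]|$ $F$-tokens and at least $|G\cap[s]|$ $G$-tokens, while the matching upper bound is a defect version of Hall's theorem (equivalently a max-flow/min-cut count), realised by a greedy placement that moves tokens into free early slots so as to avoid doubling. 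For the contradiction I only need the existence direction, and the hypothesis $|F\cap[s]|+|G\cap[s]|\le s+t-1$ makes the maximum at most $t-1$, yielding the desired $F',G'$.

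Establishing this optimal placement is the step I expect to be the main obstacle: one must verify that the prefix inequality is precisely the condition guaranteeing enough empty slots, so that the greedy never needs more than $t-1$ doubled slots. The reduction to left-compressed families and the passage from the prefix property of the claim to the statement of the theorem are routine once the shift operator's standard properties are in hand; all the genuine combinatorial content lies in this counting of forced coincidences.
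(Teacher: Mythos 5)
Your strategy coincides with the paper's in its first stage (compress, then prove a prefix property for compressed families) but diverges at the decisive step, and it is exactly there that your write-up has a genuine gap — the one you yourself flag. The routine parts are fine: simultaneous shifting preserves cross-$t$-intersection (the paper cites this as Lemma 2.1 of Borg), iterating it terminates by the weight argument, and a family closed under all left compressions is closed under the domination order $\preceq$; your reduction to producing $F'\preceq F$, $G'\preceq G$ of sizes $a,b$ with $|F'\cap G'|\le t-1$ is therefore sound. The gap is the upper-bound half of your placement min-max, which is the \emph{only} direction you need: that $|F\cap[s]|+|G\cap[s]|\le s+t-1$ for all $s$ guarantees a placement with at most $t-1$ doubled slots. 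This claim is true, but it does not follow from defect Hall or max-flow as directly as you assert. The deficiency form of Hall's theorem applied to all $a+b$ tokens gives a matching missing at most $t-1$ tokens, but an \emph{arbitrary} maximum matching can put late-deadline tokens into early slots, after which the missed tokens cannot be re-inserted at cost one collision each: for $F=\{1,2\}$, $G=\{2\}$, $n=2$, the maximum matching sending the deadline-$2$ $F$-token to slot $1$ and the $G$-token to slot $2$ strands the deadline-$1$ $F$-token with nowhere legal to go. Likewise, an unspecified greedy ``into free early slots'' can fail: for $F=\{2\}$, $G=\{2,4\}$, $n=4$ the maximal prefix excess is $0$, yet eagerly placing the deadline-$4$ token at slot $1$ forces a collision at slot $2$. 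What is needed (and suffices) is an earliest-deadline-first rule — at each slot place the forced tokens, otherwise one unplaced token of smallest deadline — plus a counting argument at the last collision slot $s^*$: every slot of $[s^*]$ then carries a token of deadline at most $s^*$ and every collision slot carries two, so the number of collisions is at most $|F\cap[s^*]|+|G\cap[s^*]|-s^*\le t-1$. Without this (or an equivalent exchange argument) the heart of your proof is an unproved assertion, not a citation of a standard theorem.

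For comparison, the paper sidesteps this machinery entirely: in Lemma~\ref{maximal_compressed} it compresses only $\F$, then takes a counterexample pair $(F,G)$ minimizing $w(F)$, reformulates the prefix condition via order statistics (Lemma~\ref{eq_border}: some $m(\overline{G},i)$ exceeds $m(F,t+i-1)$), and reaches a contradiction by applying a single compression $\delta_{uv}$ to $F$. Once your placement lemma is actually proved, your route is arguably more conceptual — it isolates a clean min-max statement of independent interest — but the paper's argument is shorter and needs no Hall-type input.
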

\begin{proof}
See Section \ref{sec:th:main}.
\end{proof}
Observe that $\F^*$ and $\G^*$ as in Theorem \ref{th:main} must be cross-$t$-intersecting. In fact, Theorem \ref{th:main} simply says that $\F^*\times \G^*$ is a subset of the union of the Cartesian products of the Hirschorn's pairs.

\begin{theorem}[Reformulation of Theorem \ref{th:main}]
\label{th:ref}
Let $\F$ be an $a$-uniform family and $\G$ be a $b$-uniform family. Assume that $\F$ and $\G$ are cross-$t$-intersecting. Then there exists an $a$-uniform family $\F^*$ with $|\F^*| = |\F|$ and a $b$-uniform family $\G^*$ with $|\G^*| = |\G|$ such that
$$\F^* \times \G^* \subseteq \bigcup\limits_{\substack{u,v,s\in [n] \\ u + v = s + t}} \left\{F \in\binom{[n]}{a} : |F\cap [s]| \ge u\}\right\} \times \left\{G \in\binom{[n]}{b} : |G\cap [s]| \ge v\}\right\}.$$
\end{theorem}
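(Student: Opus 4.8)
The plan is to prove Theorem~\ref{th:ref} by the classical shifting technique. For $i<j$ let $S_{ij}$ denote the usual shift: in a family it replaces every set $A$ with $j\in A$ and $i\notin A$ for which $(A\setminus\{j\})\cup\{i\}$ is not already present, by the set $(A\setminus\{j\})\cup\{i\}$. Applying the same $S_{ij}$ to $\F$ and to $\G$ simultaneously keeps $\F$ $a$-uniform and $\G$ $b$-uniform, preserves $|\F|$ and $|\G|$, and (this is the standard lemma on shifting cross-intersecting families) keeps the pair cross-$t$-intersecting. Since every effective shift strictly decreases the nonnegative integer $\sum_{A\in\F}\sum_{x\in A}x+\sum_{B\in\G}\sum_{y\in B}y$, shifting repeatedly over all pairs $i<j$ terminates in families $\F^{*},\G^{*}$ of the required sizes that are still cross-$t$-intersecting and are stable under every $S_{ij}$. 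I will use the well-known fact that families stable under all shifts are exactly the down-sets of the shifting order $\preceq$, where $\{x_1<\dots<x_m\}\preceq\{y_1<\dots<y_m\}$ means $x_k\le y_k$ for all $k$; in particular $F\in\F^{*}$ and $F'\preceq F$ imply $F'\in\F^{*}$, and likewise for $\G^{*}$.

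\textbf{Reduction to a min--max identity.} The core of the argument is the identity
\[
\min_{F'\preceq F,\ G'\preceq G}|F'\cap G'|\;=\;\max_{0\le s\le n}\bigl(|F\cap[s]|+|G\cap[s]|-s\bigr),
\]
which I would isolate as a lemma, valid for every fixed $F$ with $|F|=a$ and $G$ with $|G|=b$ (here $[0]=\es$). Granting it, I finish as follows. Fix $F\in\F^{*}$ and $G\in\G^{*}$. Because $\F^{*},\G^{*}$ are down-sets, every $F'\preceq F$ lies in $\F^{*}$ and every $G'\preceq G$ lies in $\G^{*}$, so cross-$t$-intersection forces $|F'\cap G'|\ge t$ for all of them and the left-hand side is at least $t$. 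Hence the right-hand side is at least $t$, i.e.\ there is an $s$ with $|F\cap[s]|+|G\cap[s]|\ge s+t$; since $t\ge1$ while the value at $s=0$ is $0$, this $s$ lies in $[n]$. This is exactly the conclusion of Theorem~\ref{th:main}, equivalently Theorem~\ref{th:ref}.

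\textbf{The easy half of the identity.} I would prove ``$\ge$'' first. Lowering elements never decreases a prefix count, so $F'\preceq F$ gives $|F'\cap[s]|\ge|F\cap[s]|$ and similarly for $G'$; then inclusion--exclusion inside $[s]$ yields $|F'\cap G'|\ge|F'\cap[s]|+|G'\cap[s]|-s\ge|F\cap[s]|+|G\cap[s]|-s$, and maximizing over $s$ gives the bound. The reverse inequality ``$\le$'', i.e.\ producing $F'\preceq F$ and $G'\preceq G$ that meet the bound, is the heart of the proof and the step I expect to be the main obstacle.

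\textbf{The hard half, and the obstacle.} I would phrase this as a scheduling problem: regard each element of $F$ as a red token and each element of $G$ as a blue token, each with a ``deadline'' equal to its value, and place every token at an integer position not exceeding its deadline so that the red tokens occupy distinct positions (forming $F'$) and the blue tokens occupy distinct positions (forming $G'$). Then $|F'\cap G'|$ is the number of positions carrying both colors, while $|F\cap[s]|+|G\cap[s]|$ counts the tokens forced into the $s$ positions of $[s]$, so the prefixes $[s]$ are the only obstructions and the claim becomes a deficiency statement. I would prove it either by an appropriate max-flow / defect-Hall model or, more self-containedly, by induction on $n$: if $n\notin F\cup G$ one passes to $[n-1]$; if $n$ lies in exactly one of $F,G$ one parks that token at position $n$, where it cannot conflict, and recurses; and if $n\in F\cap G$ one doubles position $n$ precisely when the value $a+b-n$ at $s=n$ strictly exceeds the value at every shorter prefix, and otherwise pushes the blue token down into $[n-1]$. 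Checking that this never creates more than $\max_{s}\bigl(|F\cap[s]|+|G\cap[s]|-s\bigr)$ doubled positions---in particular getting the case $n\in F\cap G$ and its boundary instances (such as $G=[n]$) right---is the one genuinely technical point of the whole argument.
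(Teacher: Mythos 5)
Your overall architecture is sound, and its first half coincides with the paper's: simultaneous shifting preserves uniformity, sizes, and the cross-$t$-intersecting property (the same lemma of Borg that the paper cites), the weight argument terminates, and your ``well-known fact'' that shift-stable families are down-sets of the dominance order $\preceq$ is correct and provable in a few lines. The reduction of Theorem~\ref{th:ref} to your min--max identity is also valid, and the identity itself is true. The problem is that you have reduced the entire theorem to exactly the statement you did not prove: the ``$\le$'' half of
\[
\min_{F'\preceq F,\ G'\preceq G}|F'\cap G'| \;=\; \max_{0\le s\le n}\bigl(|F\cap[s]|+|G\cap[s]|-s\bigr),
\]
and your own text concedes that verifying the induction---especially the case $n\in F\cap G$, where ``pushing the blue token down'' turns the blue deadlines into a multiset whenever $n-1\in G$, forcing you to restate the induction hypothesis for multisets and to re-check both feasibility and the invariant that the number of doubled positions never exceeds the max---is ``the one genuinely technical point.'' As written this is a proof plan, not a proof: all of the difficulty of Theorem~\ref{th:main} has been concentrated into this unproven deficiency statement, so the proposal has a genuine gap at its core.

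It is worth seeing how the paper avoids needing this strong duality at all. The paper compresses only $\F^*$ (closure of $\G^*$ under shifts is never used), then runs a minimal-counterexample argument: among pairs $(F,G)\in\F^*\times\G^*$ violating the conclusion, take one with $w(F)$ minimal; Lemma~\ref{eq_border} re-encodes the conclusion as an order-statistics condition (there exists $i$ with $m(\overline{G},i)>m(F,t+i-1)$), and a single left compression $\delta_{uv}(F)$ with $u=m(\overline{G},j)\notin F$ and $v=m(F,t+j-1)$ produces a set of smaller weight that still violates that condition, a contradiction. In effect the paper needs only the easy ``monotonicity'' content of your identity plus one local exchange step, never the full min--max equality. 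To salvage your route, either carry out the scheduling induction in full (stated for multisets of deadlines, with the boundary cases done explicitly), or switch to the paper's minimal-counterexample device, which sidesteps the hard half entirely.
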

This result yields an \emph{approximate} version of Conjecture \ref{Hirschorn}, and not only for the product functional, but for any \emph{symmetric super-additive} functional $h$. Here we call a functional $h$ \emph{symmetric} if $h(\F, \G) = h(\F^\prime, \G^\prime)$ whenever $|\F^\prime| = |\F|, |\G^\prime| = |\G|$. In turn, we call 
we call a functional $h$ \emph{super-additive} if whenever families $\F, \G, \F_1, \G_1, \ldots, \F_k,\G_k$ are such that
$$\F\times \G \subseteq \bigcup\limits_{i = 1}^k \F_i\times \G_i,$$
we have:
$$h(\F, \G) \le \sum\limits_{i = 1}^k h(\F_i, \G_i).$$
For instance, the sum and the product functionals are symmetric and super-additive. Now, let $\widehat{N}_h(n, a, b, t)$ be the maximum of $h(\F,\G)$ over the Hirshcorn's pairs of families, i.e.,
\begin{align*}
&\widehat{N}_h(n, a, b, t) \\ &= \max\limits_{\substack{u,v,s\in [n] \\ u + v = s + t}}h\left( \left\{F \in\binom{[n]}{a} : |F\cap [s]| \ge u\}\right\}, \left\{G \in\binom{[n]}{b} : |G\cap [s]| \ge v\}\right\}\right).
\end{align*}
Obviously, $N_h(n, a, b, t) \ge \widehat{N}_h(n, a, b, t)$ for any $h$, and Conjecture \ref{Hirschorn} states that $N_{prod}(n, a, b, t) = \widehat{N}_{prod}(n, a, b, t)$. We obtain from Theorem \ref{th:ref} the following Corollary.
\begin{corollary} 
\label{sa}
For any symmetric super-additive functional $h$ we have:
$$\widehat{N}_h(n,a,b,t) \le N_h(n,a,b,t) \le n^2 \cdot \widehat{N}_h(n,a,b,t).$$
\end{corollary}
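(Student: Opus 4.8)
The plan is to read off the Corollary directly from Theorem~\ref{th:ref} together with the two assumed properties of $h$. The left inequality $\widehat{N}_h(n,a,b,t) \le N_h(n,a,b,t)$ requires no work: every Hirschorn pair is in particular a cross-$t$-intersecting pair consisting of an $a$-uniform and a $b$-uniform family, hence it is admissible in the optimization problem~(\ref{eq:optProb}--\ref{eq:constraint3}) that defines $N_h$. So all the content is in the right inequality, and here the strategy is simply to transport an optimal pair through Theorem~\ref{th:ref} and then apply symmetry and super-additivity in turn.

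First I would fix a pair $(\F,\G)$ attaining the maximum, so that $\F$ is $a$-uniform, $\G$ is $b$-uniform, the two families are cross-$t$-intersecting, and $h(\F,\G) = N_h(n,a,b,t)$. Applying Theorem~\ref{th:ref} yields families $\F^*,\G^*$ with $|\F^*| = |\F|$ and $|\G^*| = |\G|$ such that, writing $\F_{s,u} = \{F\in\binom{[n]}{a} : |F\cap[s]|\ge u\}$ and $\G_{s,v} = \{G\in\binom{[n]}{b} : |G\cap[s]|\ge v\}$ for brevity,
$$\F^* \times \G^* \subseteq \bigcup_{\substack{u,v,s\in[n]\\ u+v=s+t}} \F_{s,u}\times \G_{s,v}.$$

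Next I would chain the two hypotheses on $h$. Since $|\F^*| = |\F|$ and $|\G^*| = |\G|$, symmetry gives $h(\F,\G) = h(\F^*,\G^*)$. Applying super-additivity to the displayed containment then yields
$$h(\F^*,\G^*) \le \sum_{\substack{u,v,s\in[n]\\ u+v=s+t}} h(\F_{s,u},\G_{s,v}),$$
and each summand, being the value of $h$ on a genuine Hirschorn pair (indeed $u+v=s+t$), is bounded above by $\widehat{N}_h(n,a,b,t)$. It remains only to count the summands: a triple $(s,u,v)$ in the index set is determined by the choice of $s\in[n]$ and $u\in[n]$, since $v = s+t-u$ is then forced, so there are at most $n^2$ of them. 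Combining everything gives $N_h(n,a,b,t) = h(\F^*,\G^*) \le n^2\cdot\widehat{N}_h(n,a,b,t)$.

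There is no real obstacle once Theorem~\ref{th:ref} is granted; the only points needing care are bookkeeping ones. I would double-check that the crude bound $n^2$ is legitimate (it is a deliberate over-count, ignoring the constraint $v\in[n]$) and that every pair $(\F_{s,u},\G_{s,v})$ arising in the union is of the exact shape appearing in the definition of $\widehat{N}_h$, so that it is indeed dominated by that maximum. Both are routine, which is why the approximation factor comes out as the clean polynomial $n^2$.
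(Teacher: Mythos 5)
Your proposal is correct and follows essentially the same route as the paper's proof: take an optimal pair, pass to $(\F^*,\G^*)$ via Theorem~\ref{th:ref}, use symmetry to preserve the value of $h$, apply super-additivity over the union of Hirschorn products, and bound the number of Hirschorn pairs by $n^2$. Your explicit counting of the index set (noting $v=s+t-u$ is forced) just makes precise what the paper states as ``there are at most $n^2$ Hirschorn's pairs.''
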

\begin{proof}
Let $\F, \G$ be a pair of families on which $N_h(n,a,b,t)$ is attained. Take $\F^*, \G^*$ as in Theorem \ref{th:ref}. Since $h$ is symmetric, we have $N_h(n, a, b, t) = h(\F,\G) = h(\F^*,\G^*)$. So it remains to bound $h(\F^*,\G^*)$. Now, $\F^*\times \G^*$ belongs to the union of the Cartesian products of the Hirschorn's pairs.  Due to super-additivity of $h$ this means that $h(\F^*,\G^*)$ is bounded by the sum of the values of $h$ on the Hirschorn's pairs. As there are at most $n^2$ Hirschorn's pairs, we are done.
\end{proof}
For most values of the parameters, 
$N_h(n,a,b,t)$ is super-polynomial in $n$ so that the $n^2$-factor is rather small compared to $\widehat{N}_h(n,a,b,t)$ and $N_h(n,a,b,t)$. In fact, for some applications of the Hirschorn's
conjecture polynomial factors are irrelevant. For instance, this applies to a conditional result of Zvonarev and Raigorodskii~\cite[Theorem 3]{ZR15} for which we, thus, obtain an unconditional proof.

Despite this result, it is still interesting whether Hirschorn's conjecture is true exactly, i.e., whether $N_{prod}(n,a,b,t) = \widehat{N}_{prod}(n,a,b,t)$. We disprove this and provide an infinite series of counter-examples in the following proposition.

\begin{proposition}
\label{inf_counter}
For infinitely many $n$ we have $$N_{prod}(n, 2, n - 2, 1) > \widehat{N}_{prod}(n,2,n - 2,1).$$
\end{proposition}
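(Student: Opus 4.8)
The plan is to reduce both quantities to explicit expressions in $M=\binom{n}{2}$ and then exploit the discreteness of the values $\binom{j}{2}$. First I would pass to complements. Identifying an $(n-2)$-set $G$ with the pair $\overline{G}=[n]\sm G$, the condition $|F\cap G|\ge 1$ becomes $F\neq\overline{G}$. Hence a cross-$1$-intersecting pair $(\F,\G)$ corresponds exactly to two \emph{disjoint} families $\F$ and $\overline{\G}=\{\overline G:G\in\G\}$ of $2$-element subsets (edges on $[n]$), with $|\F|\cdot|\G|=|\F|\cdot|\overline{\G}|$. Maximizing the product of the sizes of two disjoint subsets of the $M$-element edge set gives
$$N_{prod}(n,2,n-2,1)=\left\lfloor \tfrac{M}{2}\right\rfloor\left\lceil \tfrac{M}{2}\right\rceil .$$

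Next I would evaluate $\widehat{N}_{prod}(n,2,n-2,1)$ straight from the definition. Since $a=2$, a Hirschorn family $\F$ is nonempty only for $u\in\{1,2\}$; with $v=s+1-u$, the case $u=2$ gives $\F=\{F\subseteq[s]\}$ of size $\binom{s}{2}$ and $\G$ of size $M-\binom{s}{2}$, while $u=1$ gives $\F$ equal to the edges meeting $[s]$ and $\G$ of size $\binom{n-s}{2}$. Both cases produce a product of the form $\binom{j}{2}\bigl(M-\binom{j}{2}\bigr)$ with $j\in\{0,\dots,n\}$, so
\[ \widehat{N}_{prod}(n,2,n-2,1)=\max_{0\le j\le n}\binom{j}{2}\Bigl(M-\binom{j}{2}\Bigr)=\frac{M^2}{4}-d^2,\qquad d=\min_{j}\Bigl|\binom{j}{2}-\tfrac{M}{2}\Bigr|, \]
the minimizer sitting near $j\approx n/\sqrt2\le n$, so the constraint $j\le n$ is not binding.

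Comparing the two formulas, the strict inequality $N_{prod}>\widehat{N}_{prod}$ fails precisely when a value $\binom{j}{2}$ hits the target: for even $M$ when $j(j-1)=M$, and for odd $M$ when $j(j-1)\in\{M-1,M+1\}$. In every such failure there is an integer $j$ with $|j(j-1)-M|\le 1$. The heuristic reason failures are rare is that consecutive values $\binom{j}{2}$ near the optimum differ by $j\approx n/\sqrt2$, so the target $M/2$ generically lies at distance $d=\Theta(n)$ from all of them; only exact hits cause equality.

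Finally, to produce infinitely many good $n$ I would show that no two consecutive integers $n,n+1\ge 5$ can both fail. Assuming both fail, choose $j,j'\ge 2$ with $|j(j-1)-\binom{n}{2}|\le 1$ and $|j'(j'-1)-\binom{n+1}{2}|\le 1$; subtracting and using $\binom{n+1}{2}-\binom{n}{2}=n$ gives $j'(j'-1)-j(j-1)=n+\delta$ with $|\delta|\le 2$, which forces $j'\ge j+1$ and hence $2j\le n+2$. Then $j(j-1)\le \tfrac{n(n+2)}{4}$, while $j(j-1)\ge\binom{n}{2}-1$, and combining these yields $n^2-4n-4\le 0$, impossible for $n\ge 5$. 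Thus at least one of $n,n+1$ gives the strict inequality for every $n\ge 5$, so infinitely many $n$ work. I expect the main obstacle to be the careful bookkeeping in the two reduction steps: correctly enumerating the admissible Hirschorn parameters when $a=2$, and tracking the parity of $M$ so that the floor/ceiling value of $N_{prod}$ is compared against the integer values $\binom{j}{2}$ with exactly the right slack (the distinction between "$j(j-1)=M$" and "$j(j-1)$ within $1$ of $M$"). The infinitude itself is then the short gap argument above.
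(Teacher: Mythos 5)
Your proposal is correct, but its second half takes a genuinely different route from the paper's. Both arguments share the same opening reduction (which the paper gives in the discussion after the proposition and at the start of its Section~4): identifying each $(n-2)$-set $G$ with $\overline{G}$, cross-$1$-intersection becomes disjointness of two families of $2$-sets, so $N_{prod}(n,2,n-2,1)=\lfloor M/2\rfloor\lceil M/2\rceil$ with $M=\binom{n}{2}$, and every nonempty Hirschorn pair yields a product of the form $\binom{j}{2}\bigl(M-\binom{j}{2}\bigr)$, since in both admissible cases $u\in\{1,2\}$ the two family sizes sum to exactly $M$. The divergence is in how one rules out a Hirschorn pair achieving the balanced split. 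The paper restricts attention to $n\equiv 8\pmod{12}$ (so that $M$ is even), notes that an even split would force the quadratic equation $2s^2-(4n-2)s+n^2-n=0$ to have an integer root, and kills this with a congruence: $n\equiv 2\pmod 3$ forces $s^2\equiv 2\pmod 3$, which is impossible. You instead handle both parities of $M$, observe that equality $N_{prod}=\widehat{N}_{prod}$ forces an integer $j$ with $|j(j-1)-M|\le 1$, and run a spacing argument: witnesses $j,j'$ for two consecutive values $n,n+1$ would satisfy $j'(j'-1)-j(j-1)=n+\delta$ with $|\delta|\le 2$, forcing $j'\ge j+1$, hence $2j\le n+2$, hence $n^2-4n-4\le 0$, impossible for $n\ge 5$; all these steps check out. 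The trade-off: the paper's congruence argument is shorter and produces an explicit arithmetic progression of counterexamples, while your gap argument does not name the good $n$ but is stronger in density --- at least one of every two consecutive integers $n\ge 5$ works, with no parity restriction on $\binom{n}{2}$ --- and it also yields the exact closed forms $N_{prod}=\lfloor M/2\rfloor\lceil M/2\rceil$ and $\widehat{N}_{prod}=M^2/4-d^2$, which are of independent interest.
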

\begin{proof}
See Section \ref{sec:inf_counter}.
\end{proof}
Arguably, this setting of parameters is rather specific. Indeed, $F\in\binom{[n]}{2}$ and $G\in\binom{[n]}{n - 2}$ intersect if and only if $F$ and $G$ are not the complements of each other. 
So obviously  $N_{prod}(n, 2, n - 2, 1)$ is attained on the families obtained in the following way:  split $\binom{[n]}{2}$ into two equals parts and replace one  part by the family consisting of  the complements of the pairs from  this part. 
Here we assume that $\binom{n}{2}$ is even.  All that remains to do is to check that for infinitely many $n$ such that $\binom{n}{2}$ is even no Hirschorn pair partitions $\binom{[n]}{2}$ evenly.

For completeness, we also provide concrete counter-examples for a less specific settings of parameters.
Let $a=2k+1$, $b=2k+2$, $n=4k+3$, $t=2$. Direct computations show that
for $3\leq k\leq 50$ families
\[
\begin{aligned}
  \A_k =& \big\{ A \in\binom{[n]}{a}: |A\cap[2k+1]|\geq k+1 \text{ and
  } |A\cap[2k+3]|\geq k+2 \big\},\\
  \B_k =& \big\{ B \in\binom{[n]}{b}: |B\cap[2k+1]|\geq k+2 \text{ or
  } |B\cap[2k+3]|\geq k+3 \big\}
\end{aligned}
\]
provide a larger value of $|\A_k|\cdot|\B_k|$ than any Hirshorn's
pair. It is worth to mention that   $\widehat N_{prod}(4k+3,2k+1,2k+3,2)$ is attained on a Hirshorn
pair $\F$, $\G$ with $u = v=k+1$, $s = 2k$ for this range of the parameter
$k$. It can be proven that $|\A_k|\cdot|\B_k|>|\F|\cdot |\G|$ for all
values of~$k$. 
Unfortunately, we were not able to prove that $|\F|\cdot |\G|
  =\widehat N_{prod}(4k+3,2k+1,2k+3,2) $ for all $k\geq 3$. If so, it
  would be another  infinite series of counter-examples to Hirshorn's
  conjecture. In any way,  there is still a possibility that for a wide range of parameters Hirschorn's conjecture is true exactly. We leave this question for future work.

\subsection{Asymptotic bounds for $N_{prod}(n, a, b, t)$}
Our result allows to express $N_{prod}(n, a, b, t)$ up to a polynomial factor in terms of the products of the binomial coefficients.
\begin{proposition}
\label{M_prop}
 It holds that
$M \le N_{prod}(n, a, b, t) \le n^3 \cdot M$, where
\begin{equation}
\label{M}
M = \max\limits_{\substack{ u, v, s\in [n] \\ u + v \ge s + t}} \binom{s}{u} \binom{n - s}{a - u} \binom{s}{v} \binom{n - s}{b - v}.
\end{equation}
\end{proposition}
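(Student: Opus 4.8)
The plan is to establish the two inequalities separately; the lower bound $M\le N_{prod}(n,a,b,t)$ is elementary, while the upper bound combines Theorem~\ref{th:ref} with super-additivity of the product functional. For the lower bound, let $u,v,s\in[n]$ with $u+v\ge s+t$ attain the maximum in~\eqref{M} (if $M=0$ there is nothing to prove). Non-vanishing of this product forces $0\le u,v\le s$ and $0\le a-u,\, b-v\le n-s$, so the exact-layer families $\F_0=\{F\in\binom{[n]}{a}:|F\cap[s]|=u\}$ and $\G_0=\{G\in\binom{[n]}{b}:|G\cap[s]|=v\}$ are nonempty, with $|\F_0|=\binom{s}{u}\binom{n-s}{a-u}$ and $|\G_0|=\binom{s}{v}\binom{n-s}{b-v}$. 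I would then check that this pair is cross-$t$-intersecting: for $F\in\F_0$, $G\in\G_0$ the sets $F\cap[s]$ and $G\cap[s]$ are subsets of $[s]$ of sizes $u$ and $v$, hence meet in at least $u+v-s\ge t$ elements, so $|F\cap G|\ge t$. Thus $(\F_0,\G_0)$ is feasible for~\eqref{eq:optProb}, and $N_{prod}(n,a,b,t)\ge|\F_0|\cdot|\G_0|=M$.

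For the upper bound I would take an optimal pair $(\F,\G)$ and pass to $\F^*,\G^*$ as in Theorem~\ref{th:ref}, so that $|\F^*|=|\F|$, $|\G^*|=|\G|$, and $\F^*\times\G^*\subseteq\bigcup_{u+v=s+t}\F_{s,u}\times\G_{s,v}$, where $\F_{s,u}=\{F\in\binom{[n]}{a}:|F\cap[s]|\ge u\}$ and $\G_{s,v}=\{G\in\binom{[n]}{b}:|G\cap[s]|\ge v\}$ are the Hirschorn threshold families. The key move is to refine each threshold family into its exact layers, $\F_{s,u}=\bigcup_{i\ge u}\F_{s,i}^{=}$ and $\G_{s,v}=\bigcup_{j\ge v}\G_{s,j}^{=}$, where $\F_{s,i}^{=}=\{F:|F\cap[s]|=i\}$ and $\G_{s,j}^{=}=\{G:|G\cap[s]|=j\}$. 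Since $i\ge u$, $j\ge v$ together with $u+v=s+t$ imply $i+j\ge s+t$, every Hirschorn product is covered by exact-layer products whose indices satisfy $i+j\ge s+t$, so
$$\F^*\times\G^*\subseteq\bigcup_{\substack{s,i,j\in[n]\\ i+j\ge s+t}}\F_{s,i}^{=}\times\G_{s,j}^{=}.$$

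Applying super-additivity of $h_{prod}$ to this covering and using symmetry ($h_{prod}(\F,\G)=h_{prod}(\F^*,\G^*)$) yields
$$N_{prod}(n,a,b,t)=|\F^*|\cdot|\G^*|\le\sum_{\substack{s,i,j\in[n]\\ i+j\ge s+t}}\binom{s}{i}\binom{n-s}{a-i}\binom{s}{j}\binom{n-s}{b-j}.$$
Each summand is exactly a term of the maximum~\eqref{M} with $u=i$, $v=j$, hence is at most $M$, and it remains only to bound the number of summands. I would note that a nonzero term requires $0\le i\le s$ and $0\le j\le s$, which together with $i+j\ge s+t$ forces $i,j\ge t\ge 1$; thus $s,i,j$ all lie in $[n]$ and there are at most $n^3$ triples, giving $N_{prod}(n,a,b,t)\le n^3\cdot M$. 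The one genuinely delicate point is obtaining the exponent $3$ rather than $4$: naively chaining $N_{prod}\le n^2\widehat{N}_{prod}$ from Corollary~\ref{sa} with the separate estimate $\widehat{N}_{prod}\le n^2 M$ (which follows from the same layer argument applied to a single Hirschorn product) loses a factor of $n$. The improvement comes precisely from performing the layer decomposition directly on $\F^*\times\G^*$, so that the three summation indices $s,i,j$ are enumerated only once.
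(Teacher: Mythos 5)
Your proof is correct and follows essentially the same route as the paper's: the lower bound via the exact-layer pair $\{F:|F\cap[s]|=u\}$, $\{G:|G\cap[s]|=v\}$ (cross-$t$-intersecting since $u+v\ge s+t$), and the upper bound by covering $\F^*\times\G^*$ from Theorem~\ref{th:ref} with exact-layer products indexed by triples $(s,i,j)$ with $i+j\ge s+t$, each of size at most $M$, of which there are at most $n^3$. The paper merely leaves the layer decomposition implicit in the phrase ``From Theorem~\ref{th:ref} we obtain,'' so your write-up is the same argument with the details (including the harmless edge cases $i,j\ge t\ge 1$) made explicit.
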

\begin{proof}
From Theorem \ref{th:ref} we obtain:
$$N_{prod}(n, a, b, t) \le \sum\limits_{\substack{ u, v, s\in [n] \\ u + v \ge s + t}} \binom{s}{u} \binom{n - s}{a - u} \binom{s}{v} \binom{n - s}{b - v}.$$
All the terms in this sum are bounded by $M$, and there are at most $n^3$ of them, so we get
$N_{prod}(n, a, b, t)  \le n^3 M$.
On the other hand, $N_{prod}(n,a,b,t) \ge M$, because  $\binom{s}{u} \binom{n - s}{a - u} \binom{s}{v} \binom{n - s}{b - v}$ equals $|\F| \cdot |\G|$ 
 for families $\F = \{F\in\binom{[n]}{a} : |F\cap [s]| = u\}, \G= \{G\in\binom{[n]}{a} : |G\cap [s]| = v\}$ that are cross-$t$-intersecting due to the restriction $u + v \ge s + t$.
\end{proof}

Unfortunately, it seems hard to compute the exact asymptotic behavior of 
the quantity \eqref{M}. We provide two upper bounds on it that might be sufficient for applications.

\begin{proposition}
\label{upper_bounds}
Assume that $a + b < n + t$. Then the following upper bounds hold:
\begin{itemize}
\item \textbf{(a)} 
$N_{prod}(n,a, b, t) \le n^3 \cdot \exp\left\{2h\left(\frac{a + b - 2t}{2n}\right)n\right\}$, where $h\colon [0, 1]\to[0,+\infty)$ denotes the Shannon function:
$$h(x) = x\ln\left(\frac{1}{x}\right) + (1 - x) \ln\left(\frac{1}{1 - x}\right).$$
\item \textbf{(b)}
$N_{prod}(n,a, b, t)\le 8n^4 \cdot \exp\left\{-\frac{(\min\{t, n - a - b + t\})^2}{8 \cdot (\min\{a, n - a\} + \min\{b, n - b\})}\right\} \cdot \binom{n}{a}\cdot\binom{n}{b}$.
\end{itemize}
\end{proposition}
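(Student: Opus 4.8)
The plan is to derive both bounds from Proposition~\ref{M_prop}, which reduces the task to estimating the single quantity $M$ of~\eqref{M}: since $N_{prod}(n,a,b,t)\le n^3 M$, it suffices to bound each term $\binom{s}{u}\binom{n-s}{a-u}\binom{s}{v}\binom{n-s}{b-v}$ appearing in the maximum that defines $M$, subject to $u+v\ge s+t$. A preliminary observation is that a nonzero term forces $u\le s$ and $v\le s$, hence $u+v\le 2s$, so only $s\ge t$ is relevant.

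For part (a) I would repeatedly use the Vandermonde bound $\binom{p}{i}\binom{q}{j}\le\binom{p+q}{i+j}$ (the $\binom{p+q}{i+j}=\sum_k\binom{p}{k}\binom{q}{i+j-k}$ identity keeps only one nonnegative term) together with the monotonicity of binomial coefficients below their midpoint. Writing $p=s-u$, $q=s-v$, the constraint $u+v\ge s+t$ becomes $p+q\le s-t$, so $\binom{s}{u}\binom{s}{v}=\binom{s}{p}\binom{s}{q}\le\binom{2s}{p+q}\le\binom{2s}{s-t}$, the last step using $p+q\le s-t\le s$. Symmetrically $\binom{n-s}{a-u}\binom{n-s}{b-v}\le\binom{2(n-s)}{(a+b)-(u+v)}\le\binom{2(n-s)}{(a+b)-(s+t)}$, where the final inequality requires $(a+b)-(s+t)\le n-s$, i.e.\ exactly the hypothesis $a+b<n+t$. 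One last Vandermonde step gives $\binom{2s}{s-t}\binom{2(n-s)}{(a+b)-(s+t)}\le\binom{2n}{a+b-2t}$, so $M\le\binom{2n}{a+b-2t}$, and (a) follows from the entropy estimate $\binom{2n}{k}\le\exp\{2n\,h(k/(2n))\}$ with $k=a+b-2t$.

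For part (b) I would pass to a probabilistic reading of $M$. Let $F,G$ be independent uniformly random $a$- and $b$-subsets of $[n]$ and set $X_s=|F\cap[s]|$, $Y_s=|G\cap[s]|$; then $\binom{s}{u}\binom{n-s}{a-u}=\binom{n}{a}\Pr[X_s=u]$ and likewise for $Y_s$, so each term of $M$ equals $\binom{n}{a}\binom{n}{b}\Pr[X_s=u]\Pr[Y_s=v]$. By independence, $\max_{u+v\ge s+t}\Pr[X_s=u]\Pr[Y_s=v]\le\Pr[X_s+Y_s\ge s+t]$, whence $M\le\binom{n}{a}\binom{n}{b}\max_s\Pr[X_s+Y_s\ge s+t]$. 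Each $X_s,Y_s$ is hypergeometric with means $as/n$, $bs/n$, so the threshold lies a distance $\Delta_s=s(n-a-b)/n+t$ above the mean of $X_s+Y_s$. I would then apply the Hoeffding bound for sampling without replacement, using the complement representation $X_s=s-|([n]\setminus F)\cap[s]|$ to replace the number of draws $a$ by $\min\{a,n-a\}$ (and likewise $\min\{b,n-b\}$); splitting $\Delta_s$ between the two variables and a union bound give $\Pr[X_s+Y_s\ge s+t]\le 2\exp\{-\Delta_s^2/(8(\min\{a,n-a\}+\min\{b,n-b\}))\}$, with room in the constant. Finally a short monotonicity check shows $\Delta_s\ge\min\{t,n-a-b+t\}$ for every admissible $s$: when $a+b\le n$ the function is increasing in $s$ and bounded below by $t$, while when $a+b>n$ it is decreasing and bounded below by its value $n-a-b+t$ at $s=n$, this last quantity being positive precisely because $a+b<n+t$. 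Combining with $N_{prod}\le n^3M$ and $2n^3\le 8n^4$ yields (b).

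The binomial and Vandermonde manipulations and the entropy estimate in (a) are routine. The main obstacle is the concentration argument in (b): one must handle two \emph{independent} hypergeometric variables simultaneously, choose the correct split of the deviation to produce the symmetric denominator $\min\{a,n-a\}+\min\{b,n-b\}$, invoke the complement form of the Hoeffding bound (which is what turns $a,b$ into $\min\{a,n-a\},\min\{b,n-b\}$), and, crucially, control the maximum over all $s\in[n]$ through the single uniform estimate $\Delta_s\ge\min\{t,n-a-b+t\}$. Identifying this worst-case deviation and checking that it remains positive exactly under the hypothesis $a+b<n+t$ is the delicate point that binds the two cases together.
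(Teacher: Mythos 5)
Your proof is correct, but both halves run along genuinely different lines from the paper's. For part \textbf{(a)}, the paper normalizes all parameters by $n$, replaces every binomial coefficient by its entropy estimate right away, and then applies concavity of $h$ twice (together with the symmetry $h(1/2+x)=h(1/2-x)$) to absorb the constraint $u+v\ge s+t$; you instead stay at the level of binomial coefficients, using the Vandermonde bound $\binom{p}{i}\binom{q}{j}\le\binom{p+q}{i+j}$ and monotonicity below the midpoint to reach the clean intermediate inequality $M\le\binom{2n}{a+b-2t}$, invoking a single entropy estimate only at the very end. Your route is more elementary and even slightly sharper (a binomial coefficient rather than its exponential majorant); the paper's route keeps everything inside one analytic framework that it then reuses in part \textbf{(b)}. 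For part \textbf{(b)}, the paper first reduces to the case $a+b\le n$ by complementation and then performs a second-order Taylor expansion of $h$ around $\alpha$, bounding $h''$ explicitly to extract the quadratic decay; you instead read each term probabilistically, $\binom{s}{u}\binom{n-s}{a-u}=\binom{n}{a}\Pr[X_s=u]$, pass to $\Pr[X_s+Y_s\ge s+t]$ by independence, and invoke Hoeffding's inequality for sampling without replacement, with the complement representation supplying $\min\{a,n-a\}$ and $\min\{b,n-b\}$. Your uniform estimate $\Delta_s\ge\min\{t,\,n-a-b+t\}$ treats the cases $a+b\le n$ and $a+b>n$ simultaneously, so the complementation step disappears; moreover even the crude half-half split of the deviation gives denominator $4\left(\min\{a,n-a\}+\min\{b,n-b\}\right)$ in the exponent, comfortably inside the required $8\left(\cdot\right)$, so your ``room in the constant'' claim is justified. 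The costs of your approach are one external ingredient (Hoeffding's 1963 theorem that his bounds persist under sampling without replacement, including the lower-tail form used in the complement trick, whereas the paper is self-contained modulo the entropy estimates) and two trivial checks worth making explicit: nonzero terms also force $u\le a$ and $v\le b$, hence $s+t\le a+b$, so the index $(a+b)-(s+t)$ in your monotonicity step is nonnegative; and positivity of $\Delta_s$, which your hypothesis check does establish, is genuinely needed, since the concentration bound is false for negative deviations.
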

\begin{proof}
See Section \ref{sec:upper_bounds}.
\end{proof}

Note that if $a + b \ge n + t$, then $N_{prod}(n, a, b, t) = \binom{n}{a}\cdot \binom{n}{b}$.

It is not hard to see that the first bound asymptotically outperforms the second one when $a = b$. On the other hand, for some values of the parameters the first bound (unlike the second one) is exponentially worse than a trivial upper bound  $N_{prod}(n,a,b,t) \le \binom{n}{a}\cdot\binom{n}{b}$.

\section{Proof of Theorem \ref{th:main}}
\label{sec:th:main}

\textbf{Notation.} For $X\subseteq[n]$ we denote $\overline{X} = [n]\setminus X$. For $X\subseteq [n]$ and for $1 \le i\le |X| $ by $m(X, i)$ we denote the $i$th element of $X$ in the increasing order. By the \emph{weight} of a set $X\subseteq [n]$ we mean $w(X) = \sum_{i\in X} i$. In turn, by the weight of a family $\X \subseteq 2^{[n]}$ we mean $w(\X) = \sum_{X\in \X} w(X)$.

\bigskip

Intuitively, among all pairs of families $\F^\prime, \G^\prime$, satisfying:
\begin{align}
\label{cond1}
\F^\prime \mbox{ is $a$-uniform},\,\, \G^\prime \mbox{ is $b$-uniform}, \\
\label{cond2}
|\F^\prime| = |\F|,\,\, |\G^\prime| = |\G|, \\
\label{cond3}
\F^\prime, \G^\prime \mbox{ are cross-$t$-intersecting},
\end{align}
we want one ``maximally compressed''  to the left (if one imagines elements of $[n]$ going from left to right in the increasing order). This hopefully will make sets from our families highly concentrated on the initial segments of $[n]$, as required in Theorem \ref{th:main}. We formalize this intuition via a classical compression (a.k.a. shifting) technique~\cite{Frankl87}. Namely, for $i, j\in [n]$ define a function
$$\delta_{ij} \colon 2^{[n]}\to 2^{[n]}, \qquad \delta_{ij}(X) = \left\{
\begin{aligned}
  &(X\sm\{j\})\cup\{i\}, &&\text{if}\ j\in X,\ i\notin X,\\
  &X, &&\text{otherwise.}
\end{aligned}\right.$$
If $i < j$, then $\delta_{ij}$ is called a \emph{left compression}. A left compression removes a larger element from a set and instead adds a smaller element, so intuitively the set becomes more compressed to the left. But we do this only when the larger element is in the set and the smaller element is not so that this operation  preserves the size of the set.

These operations allow us to formalize our intuitive notion of a ``maximally compressed family'' as a family which is closed under any left compression (no left compression maps a set from our family to a set outside the family).

\begin{lemma} 
\label{maximal_compressed}
There exists a pair $(\F^*, \G^*)$ satisfying \textup{(\ref{cond1}--\ref{cond3})} such that $\F^*$ is closed under any left compression.
\end{lemma}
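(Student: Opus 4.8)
The plan is to pick, among all valid pairs, one of minimum total weight and to show it is automatically compression-closed. First I would lift each left compression $\delta_{ij}$ (with $i<j$) from sets to families: for a family $\Hi$, let $C_{ij}(\Hi)$ be obtained by replacing every $H\in\Hi$ for which $j\in H$, $i\notin H$ and $(H\sm\{j\})\cup\{i\}\notin\Hi$ by the set $(H\sm\{j\})\cup\{i\}$, and keeping all other members of $\Hi$ unchanged. Three routine properties need to be recorded. (i) $C_{ij}$ preserves uniformity and, being injective (the sets it creates lie outside $\Hi$, and two distinct moved sets have distinct images), it preserves cardinality. (ii) On every moved set the weight drops by exactly $j-i>0$ and on every kept set it is unchanged, so $w(C_{ij}(\Hi))\le w(\Hi)$, with equality precisely when no set is moved. (iii) $C_{ij}(\Hi)=\Hi$ if and only if $\Hi$ is closed under $\delta_{ij}$.

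The key step, and the main obstacle, is that simultaneous compression preserves the cross-$t$-intersecting property: if $\F'$ and $\G'$ are cross-$t$-intersecting and $i<j$, then so are $C_{ij}(\F')$ and $C_{ij}(\G')$. I would verify this by fixing $A\in C_{ij}(\F')$ and $B\in C_{ij}(\G')$ and splitting into four cases according to whether $A\in\F'$ (``kept'') or $A$ is a newly created set $(A_0\sm\{j\})\cup\{i\}$ with $A_0\in\F'$ (``moved''), and likewise for $B$. If both are kept, the bound $|A\cap B|\ge t$ is immediate from cross-$t$-intersection of $\F',\G'$; if both are moved, then $i\in A\cap B$ while $j\in A_0\cap B_0$, so $|A\cap B|=|(A_0\cap B_0)\sm\{j\}|+1\ge t$. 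The delicate case is a kept $A$ together with a moved $B=(B_0\sm\{j\})\cup\{i\}$: the only way $|A\cap B|$ could drop below $t$ is when $j\in A$ and $i\notin A$, but then, since $A$ was not moved, its shift $A'=(A\sm\{j\})\cup\{i\}$ already lies in $\F'$; a direct rewriting gives $A\cap B=A'\cap B_0$, and $|A'\cap B_0|\ge t$ because $A'\in\F'$ and $B_0\in\G'$. This bookkeeping, transferring the intersection bound to the shifted partner, is where all the care is needed (the symmetric case, $A$ moved and $B$ kept, is identical).

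Finally I would conclude by minimality. The set of pairs satisfying \textup{(\ref{cond1}--\ref{cond3})} is finite and nonempty, since it contains $(\F,\G)$, so some pair $(\F^*,\G^*)$ minimizes $w(\F')+w(\G')$. Were $\F^*$ not closed under some $\delta_{ij}$ with $i<j$, then by (iii) $C_{ij}(\F^*)\ne\F^*$; replacing $(\F^*,\G^*)$ by $(C_{ij}(\F^*),C_{ij}(\G^*))$ keeps \textup{(\ref{cond1}--\ref{cond3})} by the preservation properties above, yet strictly lowers the total weight (the $\F$-part decreases by (ii), while the $\G$-part does not increase), contradicting minimality. Hence $\F^*$ is closed under every left compression, which is the assertion of the lemma; in fact the same argument shows that $\G^*$ is closed as well.
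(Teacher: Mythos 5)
Your proposal is correct and follows essentially the same route as the paper: lift each left compression $\delta_{ij}$ to an operation on families, pick a pair satisfying (\ref{cond1}--\ref{cond3}) of minimal weight, and argue that any failure of closedness would produce a strictly lighter valid pair, a contradiction. The only differences are immaterial: the paper minimizes $w(\F')$ alone and cites the preservation of the cross-$t$-intersecting property under simultaneous compression from~\cite[Lemma~2.1]{Borg2014TheMP}, whereas you minimize $w(\F')+w(\G')$ (which additionally makes $\G^*$ compression-closed, something the paper notes is achievable but unnecessary) and prove the preservation lemma from scratch via a case analysis that is correct in all four cases, including the delicate kept/moved case where the intersection bound is transferred to the shifted partner.
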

In fact, one can achieve that $\G^*$ is closed under any left compression as well, but we do not need this for the argument.
\begin{proof}[Proof of Lemma \ref{maximal_compressed}]
We define auxiliary compression operations for families of sets. Namely, for a family $\X$ and for $i < j$ define
$$\Delta_{ij}(\mathcal{X}) = \{\delta_{ij}(X) \mid X\in\X, \delta_{ij}(X)\notin \X\}
\cup \{ X \mid X\in\X, \delta_{ij}(X)\in \X\}.$$
Observe that $\Delta_{ij}$ preserves the size of $\X$. Remarkably, $\Delta_{ij}$ also preserves the cross-$t$-intersection property. I.e., if two families $\X$ and $\Y$ are cross-$t$-intersecting, then so are $\Delta_{ij}(\X)$ and $\Delta_{ij}(\Y)$, for any $i < j$, see~\cite[Lemma 2.1]{Borg2014TheMP}.

Now, take $(\F^*, \G^*)$ minimizing the quantity $w(\F^\prime)$ over all $(\F^\prime, \G^\prime)$ satisfying (\ref{cond1}--\ref{cond3}). If $\F^*$ is not closed under some left compression, then $\Delta_{ij}(\F^*) \neq \F^*$ for some $i < j$.
This means that $\Delta_{ij}(\F^*)$ has smaller weight than $\F^*$ (a left compression can only decrease the weight of a set, and whenever the resulting set is different from the original one, the weight becomes strictly smaller). 
 Therefore a pair $\Delta_{ij}(\F^*), \Delta_{ij}(\G^*)$ cannot satisfy (\ref{cond1}--\ref{cond3}). On the other hand, it satisfies  (\ref{cond1}--\ref{cond2}) because $\delta_{ij}$ and $\Delta_{ij}$ preserve the size (correspondingly, of a set and of a family of sets), and it satisfies \eqref{cond3} because $\Delta_{ij}$ preserves the cross-$t$-intersection property.
\end{proof}

It remains to show that for any $\F^*, \G^*$ as in Lemma \ref{maximal_compressed}, for all $F\in\F^*$, $G\in \G^*$ we have:
$$\exists s\in [n]  \mbox{ such that } |F\cap[s]| + |G\cap[s]| \ge s + t.$$
If $n\leq a+b-t$, then this inequality holds for $s=n$. For the rest of parameters we reformulate  this condition on $F$ and $G$  as follows.
\begin{lemma}
\label{eq_border}
Let $F\in\binom{[n]}{a}$ and $G\in\binom{[n]}{b}$. 
If $n> a+b-t$, then the following two conditions are equivalent:
\begin{itemize}
\item \textbf{(a)}  $\exists s\in [n]  \mbox{ such that } |F\cap[s]| + |G\cap[s]| \ge s + t$;
\item \textbf{(b)} $\exists i \in [a - t + 1] \mbox{ such that } m(\overline{G}, i) > m(F, t + i - 1)$ 
(note that the size of $\overline{G}$ is $n - b \ge a - t + 1$ for  the parameters into consideration so that $m(\overline{G}, i)$ is well-defined).
\end{itemize}
\end{lemma}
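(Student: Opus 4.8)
The plan is to linearize condition (a) by eliminating $G$ in favor of its complement. Since $[s]$ is partitioned into $G\cap[s]$ and $\overline{G}\cap[s]$, we have $|G\cap[s]| = s - |\overline{G}\cap[s]|$, and therefore
$$|F\cap[s]| + |G\cap[s]| - (s+t) = |F\cap[s]| - |\overline{G}\cap[s]| - t.$$
Hence condition (a) is equivalent to the existence of $s\in[n]$ with $|F\cap[s]| - |\overline{G}\cap[s]| \ge t$. I will prove that this reformulated condition is in turn equivalent to (b) by tracking, for a given $s$, the quantity $k = |F\cap[s]|$, which is the number of elements of $F$ not exceeding $s$ and hence satisfies $m(F, k)\le s$.

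For the direction (a)$\Rightarrow$(b), I would fix an $s$ with $|F\cap[s]| - |\overline{G}\cap[s]| \ge t$ and set $k = |F\cap[s]|$. Since $|\overline{G}\cap[s]|\ge 0$, this forces $k\ge t$, and trivially $k\le a$; I then put $i = k - t + 1$, so that $i\in[a-t+1]$. The inequality $|\overline{G}\cap[s]|\le k - t = i - 1$ says that fewer than $i$ elements of $\overline{G}$ lie in $[s]$, i.e.\ $m(\overline{G}, i) > s$, while $|F\cap[s]| = k$ gives $m(F, k)\le s$. Combining these, $m(\overline{G}, i) > s \ge m(F, k) = m(F, t + i - 1)$, which is exactly (b).

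For the converse (b)$\Rightarrow$(a), given $i\in[a-t+1]$ with $m(\overline{G}, i) > m(F, t+i-1)$, I would take $s = m(F, t+i-1)$. Then $|F\cap[s]| \ge t + i - 1$, since the $t+i-1$ smallest elements of $F$ all lie in $[s]$, whereas $m(\overline{G}, i) > s$ yields $|\overline{G}\cap[s]|\le i - 1$. Subtracting gives $|F\cap[s]| - |\overline{G}\cap[s]| \ge (t+i-1) - (i-1) = t$, which is the reformulated condition (a). The only place the hypothesis $n > a+b-t$ enters is in guaranteeing that $m(\overline{G}, i)$ is defined for every $i\in[a-t+1]$: it gives $|\overline{G}| = n - b \ge a - t + 1$, precisely the range of $i$ (as already noted in the statement). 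I do not expect a genuine obstacle here; the entire content of the lemma is the substitution $|G\cap[s]| = s - |\overline{G}\cap[s]|$, which converts a two-family counting condition into a single monotone discrepancy between $F$ and $\overline{G}$, and the remaining work is careful index bookkeeping to land $i$ in the stated range.
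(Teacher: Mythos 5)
Your proof is correct and takes essentially the same approach as the paper's: both rest on the complementation identity $|G\cap[s]| = s - |\overline{G}\cap[s]|$, and for (b)$\Rightarrow$(a) you make the same choice $s = m(F, t+i-1)$ with the same counting. The only cosmetic difference is in (a)$\Rightarrow$(b), where you pick the witness $i = |F\cap[s]| - t + 1$ while the paper picks $i = |\overline{G}\cap[s]| + 1$; both choices yield $m(F,t+i-1)\le s < m(\overline{G},i)$ by symmetric bookkeeping.
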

\begin{proof}
\textbf{Proof of \emph{(a)} $\implies$ \emph{(b)}.} Define $i = |\overline{G}\cap [s]| + 1$. Let us show that $i\in [a - t + 1]$. Indeed, $|\overline{G}\cap [s]| = s - |G\cap [s]| \le |F\cap[s]| + |G\cap[s]| - t -  |G\cap [s]| \le a - t$, because $|F| = a$.

 Now, observe that $m(\overline{G}, i) > s$ (because $[s]$ contains only $i - 1$ elements of $\overline{G}$). On the other hand, $|F\cap[s]| \ge s + t - |G\cap[s]| = t + |\overline{G}\cap [s]| = t + i - 1$. Hence $m(F, t + i - 1) \le s < m(\overline{G}, i)$.

\textbf{Proof of \emph{(b)} $\implies$ \emph{(a)}.}
Set $s= m(F, t + i - 1)$. In $[s]$ there are exactly $t + i - 1$ elements of $F$ and at most $i - 1$ elements of $\overline{G}$. Hence $|F\cap[s]| + |G\cap[s]| \ge t + i - 1 + s - (i - 1) = s + t$, as required.
\end{proof}
Now, assume for contradiction that there are $F\in\F^*$ and $G\in \G^*$ for which the condition \textbf{\emph{(a)}} of Lemma \ref{eq_border} is violated. Among all such pairs $(F, G)$ take one minimizing the weight $w(F)$.
 Since $\F^*$ and $\G^*$ are cross-$t$-intersecting, we have $|F\cap G| \ge t$, or, equivalently, $|F\cap\overline{G}| \le a - t$. This means that there exists $j\in [a - t + 1]$ such that $u = m(\overline{G}, j)$ is not in $F$. Now, since $F$ and $G$ violate the condition \textbf{\emph{(a)}} of Lemma \ref{eq_border}, they must also violate the condition  \textbf{\emph{(b)}} of this lemma, so we must have $ v = m(F, t + j - 1) \ge m(\overline{G}, j) = u$. Since $v$ is from $F$ and $u$ is not, we actually have $v > u$. Recall that $\F^*$ is closed under any left compression, in particular, under $\delta_{uv}$, which means that $\delta_{uv}(F) \in \F^*$.  On the other hand, since $v$ is in $F$ and $u$ is not, the weight of $\delta_{uv}(F)$ is smaller than the weight of $F$. Therefore a pair $(\delta_{uv}(F), G)$ must satisfy the condition \textbf{\emph{(a)}} of Lemma \ref{eq_border}. On the other hand, let us show that $(\delta_{uv}(F), G)$ violates the condition \textbf{\emph{(b)}} of Lemma \ref{eq_border}, and this will give us a contradiction. To do so, we show that for
 any $i\in [a - t + 1]$ the quantity $m(\overline{G}, i)$ does not exceed at least $a - t - i + 2$ different elements of $\delta_{uv}(F)$. This would mean that $m(\overline{G}, i) \le m(\delta_{uv}(F), t + i - 1)$ for every $i\in [a - t + 1]$, as required.

Recall that $F$ and $G$ violate the condition \textbf{\emph{(a)}} of Lemma \ref{eq_border} and hence also the condition \textbf{\emph{(b)}}. Thus $m(\overline{G}, i)$  does not exceed  $m(F, t + i - 1), \ldots, m(F, a)$. For $i > j$ all these elements are also in $\delta_{uv}(F)$, so in this case we are done. Now, for $i \le j$ among  $m(F, t + i - 1), \ldots, m(F, a)$ there is $v = m(F, t + j - 1)$ which is not in $\delta_{uv}(F)$, but instead in $\delta_{uv}(F)$ we have $u = m(\overline{G}, j) \ge m(\overline{G}, i)$. So if we remove $v$ from the list  $m(F, t + i - 1), \ldots, m(F, a)$ and add $u$ instead, all the elements of the list are still at least as large as $m(\overline{G}, i)$. Moreover, all the elements of the list are still distinct as $u$ is not from $F$.

%

\section{Proof of Proposition \ref{inf_counter}}
\label{sec:inf_counter}
By a discussion after the formulation of Proposition
  \ref{inf_counter} it is sufficient to show that for infinitely many
  $n$ with $\binom{n}{2}\equiv 0\pmod{2}$ there is no Hirschorn pair $\F,\G$ with $|\F| = |\G|$. We will show this for all 
 $n\equiv8\pmod{12}$. All such $n$ are divisible by $4$ so that $\binom{n}{2}$ is even.
For our setting of the parameters all the non-trivial Hirschorn pairs (i.e., Hirschorn pairs where both families are non-empty) look as follows:
$$
\F = \left\{F \in\binom{[n]}{2} : |F\cap [s]| \ge 1\}\right\}, \,\, \G = \left\{G \in\binom{[n]}{n - 2} : |G\cap [s]| \ge s\}\right\},$$
or as follows:
$$
\F = \left\{F \in\binom{[n]}{2} : |F\cap [s]| \ge 2\}\right\}, \,\, \G = \left\{G \in\binom{[n]}{n - 2} : |G\cap [s]| \ge s - 1\}\right\}.$$
In these two cases $|\F|, |\G|$ can be expressed as follows:
\begin{align*}
\mbox{\emph{(case 1):}  } |\F| &= s (n - s) + \binom{s}{2}, \qquad |\G| = \binom{n - s}{2}, \\
\mbox{\emph{(case 2):}  }|\F| &=  \binom{s}{2}, \qquad |\G| = s (n - s) + \binom{n - s}{2}. 
\end{align*}
We may consider only the first case as the second one reduces to the first one by a substitution $s\mapsto n - s$. Thus it is sufficient to demonstrate that the following equation in $s$ for all $n\equiv 8\pmod{12}$ has no integral solution:
$$s (n - s) + \frac{s(s - 1)}{2} = \frac{(n - s) (n - s - 1)}{2}.$$
By expanding the parentheses, we get:
$$2s^2 - (4n - 2) s + n^2 - n = 0.$$
Since  $n\equiv 8\pmod {12} \implies n\equiv 2\pmod{3}$,  we obtain:
$$2s^2 + 2  \equiv 0 \pmod{3} \implies s^2 \equiv 2\pmod{3},$$
but no such integral $s$ exists. 

\section{Proof of Proposition \ref{upper_bounds}}
\label{sec:upper_bounds}
\textbf{Proof of \emph{(a)}}. By Proposition \ref{M_prop} it is sufficient to show that 
$$\binom{s}{u} \binom{n - s}{a - u} \binom{s}{v} \binom{n - s}{b - v} \le \exp\left\{2h\left(\frac{a + b - 2t}{2n}\right)n\right\},$$
as long as $u + v \ge s + t$. Let us use the following abbreviations:
\begin{equation}
\label{abb}
\alpha = \frac{a}{n}, \,\, \beta = \frac{b}{n}, \,\, \tau = \frac{t}{n},\,\, \sigma = \frac{s}{n}, \,\, \mu = \frac{u}{n}, \,\, \eta = \frac{v}{n}.
\end{equation}
We will use the following upper and lower bounds on the binomial coefficients in terms of the Shannon function~\cite[Lemma 2.4.2]{cohen1997covering}:
\begin{equation}
\label{shannon}
 \frac{1}{\sqrt{8n}} \cdot  \exp\left\{h\left(\frac{k}{n}\right) \cdot n\right\}\le \binom{n}{k} \le \exp\left\{h\left(\frac{k}{n}\right) \cdot n\right\}.
\end{equation}

By \eqref{shannon} we only have to show that
$$ \sigma\left[h\left(\frac{\mu}{\sigma}\right) + h\left(\frac{\eta}{\sigma}\right)\right] + (1 - \sigma) \left[h\left(\frac{\alpha - \mu}{1 - \sigma}\right) + h\left(\frac{\beta - \eta}{1 - \sigma}\right)\right] \le 2h\left(\frac{\alpha + \beta}{2} - \tau\right), $$
as long as $\mu + \eta \ge \sigma + \tau$.

Due to concavity of $h$ 
we get:
\begin{align*}
&\sigma\left[h\left(\frac{\mu}{\sigma}\right) + h\left(\frac{\eta}{\sigma}\right)\right] + (1 - \sigma) \left[h\left(\frac{\alpha - \mu}{1 - \sigma}\right) + h\left(\frac{\beta - \eta}{1 - \sigma}\right)\right] \\
&\le 2\sigma h\left(\frac{\mu + \eta}{2\sigma}\right) + 2(1 - \sigma) h\left(\frac{\alpha - \mu + \beta - \eta}{2(1 -\sigma)}\right) \\
&= 2\sigma h\left(\frac{1}{2} + \frac{\mu + \eta - \sigma}{2\sigma}\right) + 2(1 - \sigma) h\left(\frac{1}{2} - \frac{\mu + \eta - \alpha - \beta + 1 - \sigma}{2(1 - \sigma)}\right).
\end{align*}
Due to the condition $\mu + \eta \ge \sigma + \tau$ we have
 $\mu + \eta - \sigma \ge \tau \ge 0$ and $\mu + \eta - \alpha - \beta + 1 - \sigma\ge \tau + 1 - \alpha - \beta$. In turn, due to the assumption $a + b < n + t$ we have that the quantity $\tau + 1 - \alpha - \beta$ is positive. This gives us the following:
\begin{align*}
&\sigma\left[h\left(\frac{\mu}{\sigma}\right) + h\left(\frac{\eta}{\sigma}\right)\right] + (1 - \sigma) \left[h\left(\frac{\alpha - \mu}{1 - \sigma}\right) + h\left(\frac{\beta - \eta}{1 - \sigma}\right)\right]\\
&\le 2\sigma h\left(\frac{1}{2} + \frac{\tau}{2\sigma}\right) + 2(1 - \sigma) h\left(\frac{1}{2} - \frac{\tau + 1 - \alpha - \beta}{2(1 - \sigma)}\right) \\
&= 2\sigma h\left(\frac{1}{2} - \frac{\tau}{2\sigma}\right) + 2(1 - \sigma) h\left(\frac{1}{2} - \frac{\tau + 1 - \alpha - \beta}{2(1 - \sigma)}\right)
\end{align*}

By using concavity of $h$ again we can bound the last expression as follows:
\begin{align*}
2h\left(\sigma\left[\frac{1}{2} - \frac{\tau}{2\sigma}\right] + (1 - \sigma)\left[\frac{1}{2} - \frac{\tau + 1 - \alpha - \beta}{2(1 - \sigma)}\right]\right) = 2h\left(\frac{\alpha + \beta}{2} - \tau\right).
\end{align*}

\bigskip

\textbf{Proof of \emph{(b)}}.  This upper bound takes the following form:
$$\frac{N_{prod}(n,a, b, t)}{8n^4 \cdot \binom{n}{a}\cdot\binom{n}{b}} \le \begin{cases}\exp\left\{-\frac{t^2}{8\left(\min\{a, n - a\} + \min\{b, n - b\}\right)}\right\} & a + b \le n, \\[10pt] \exp\left\{-\frac{(n - a - b + t)^2}{8\left(\min\{a, n - a\} + \min\{b, n - b\}\right)}\right\} &  a + b > n.\end{cases}$$
It is sufficient to establish this bound only in the case $a + b \le n$. Indeed, by taking the complements of the sets it is easy to obtain $N_{prod}(n, a, b, t) = N_{prod}(n, n - a, n - b, n - a - b + t)$.

So below we assume that $a + b \le n$. 
 By Proposition \ref{M_prop} it is sufficient to show that 
$$\frac{\binom{s}{u} \binom{n - s}{a - u} \binom{s}{v} \binom{n - s}{b - v}}{\binom{n}{a}\cdot \binom{n}{b}} \le 8n \cdot \exp\left\{-\frac{t^2}{8 (\min\{a, n - a\}  + \min\{b, n - b\})}\right\},$$
as long as $u + v \ge s + t$. We will use the same abbreviations as in \eqref{abb}. Due to \eqref{shannon} we only have to show that:

\begin{align*} &\sigma\left[h\left(\frac{\mu}{\sigma}\right) + h\left(\frac{\eta}{\sigma}\right)\right] + (1 - \sigma) \left[h\left(\frac{\alpha - \mu}{1 - \sigma}\right) + h\left(\frac{\beta - \eta}{1 - \sigma}\right)\right]  - h(\alpha) - h(\beta)
\\ &\le -\frac{\tau^2}{8(\min\{\alpha, 1 - \alpha\} + \min\{\beta, 1 - \beta\})},
\end{align*}
provided that $\mu + \eta \ge \sigma + \tau$.
The left-hand side of the last inequality is the sum of the following two quantities:
$$A = \sigma\cdot h\left(\frac{\mu}{\sigma}\right) + (1 - \sigma)\cdot h\left(\frac{\alpha - \mu}{1 - \sigma}\right) - h(\alpha),$$
$$B = \sigma\cdot h\left(\frac{\eta}{\sigma}\right) + (1 - \sigma)\cdot h\left(\frac{\beta - \eta}{1 - \sigma}\right) - h(\beta).$$
By concavity of $h$ both of these quantities are non-positive. So it is enough to show that either $A$ or $B$ does not exceed
$$\ -\frac{\tau^2}{8(\min\{\alpha, 1 - \alpha\} + \min\{\beta, 1 - \beta\})}.$$
Now, $\mu + \eta \ge \sigma + \tau \ge \sigma \alpha + \sigma \beta + \tau$ (the last inequality holds because we assume that $a + b\le n$). Hence either $\mu \ge \sigma\alpha + \tau/2$ or $\eta \ge \sigma\beta + \tau/2$. Let us show that:
\begin{align*}
\mu \ge \sigma\alpha + \tau/2 &\implies A \le  -\frac{\tau^2}{8(\min\{\alpha, 1 - \alpha\} + \min\{\beta, 1 - \beta\})}, \\
\eta \ge \sigma\beta + \tau/2 &\implies B\le  -\frac{\tau^2}{8(\min\{\alpha, 1 - \alpha\} + \min\{\beta, 1 - \beta\})}.
\end{align*}
We only show the first implication, 
the second one is proved similarly. 
Utilizing the Taylor series approximation for $h$, for some $\theta_1, \theta_2 \in [0, 1]$ we get:
\begin{align*}
A &= \sigma\left[h\left(\frac{\mu}{\sigma}\right) - h(\alpha) \right] + (1 - \sigma)\left[h\left(\frac{\alpha - \mu}{1 - \sigma}\right) - h(\alpha)\right] \\
&=\sigma\left[\left(\frac{\mu}{\sigma} - \alpha\right) h^\prime(\alpha) + \frac{\left(\frac{\mu}{\sigma} - \alpha\right)^2}{2} h^{\prime\prime}\left(\theta_1\alpha + (1 - \theta_1)\frac{\mu}{\sigma}\right) \right] + \\
&(1 -\sigma)\left[\left(\frac{\alpha - \mu}{1 - \sigma} - \alpha\right) h^\prime(\alpha) + \frac{\left(\frac{\alpha - \mu}{1 - \sigma} - \alpha\right)^2}{2} h^{\prime\prime}\left(\theta_2\alpha + (1 - \theta_2)\frac{\alpha - \mu}{1 - \sigma}\right) \right] \\
&= \frac{(\mu - \sigma\alpha)^2}{2}\left[\frac{h^{\prime\prime}\left(\theta_1\alpha + (1 - \theta_1)\frac{\mu}{\sigma}\right)}{\sigma} +\frac{ h^{\prime\prime}\left(\theta_2\alpha + (1 - \theta_2)\frac{\alpha - \mu}{1 - \sigma}\right)}{1 - \sigma}\right].
\end{align*}
Since we assume that  $\mu \ge \sigma\alpha + \tau/2$, we have:
$$\frac{\alpha - \mu}{1 - \sigma}\le \alpha \le \frac{\mu}{\sigma}.$$
 Now, it is easy to calculate $h^{\prime\prime}(x) = -\frac{1}{x} - \frac{1}{1 - x}$. Hence we can bound:
\begin{align*}
\frac{1}{\sigma} \cdot h^{\prime\prime}\left(\theta_1\alpha + (1 - \theta_1)\frac{\mu}{\sigma}\right) &\le -\frac{1}{1 - \theta_1\alpha - (1 - \theta_1)\frac{\mu}{\sigma}} \le -\frac{1}{1 -\alpha}, \\
\frac{1}{1 - \sigma} \cdot h^{\prime\prime}\left(\theta_2\alpha + (1 - \theta_2)\frac{\alpha - \mu}{1 - \sigma}\right) &\le -\frac{1}{\theta_2\alpha + (1 - \theta_2)\frac{\alpha - \mu}{1 - \sigma}} \le -\frac{1}{\alpha}.
\end{align*}
This yields our desired upper bound on $A$:
\begin{align*}
A&\le \frac{(\mu - \sigma\alpha)^2}{2} \cdot \left(-\frac{1}{\alpha} - \frac{1}{1 - \alpha}\right) \le -\frac{(\mu - \sigma\alpha)^2}{2} \cdot \frac{1}{\min\{\alpha, 1- \alpha\}} \\
&\le -\frac{(\mu - \sigma\alpha)^2}{2}\cdot \frac{1}{\min\{\alpha, 1- \alpha\} + \min\{\beta, 1 - \beta\}} \\
&\le -\frac{\tau^2}{8(\min\{\alpha, 1- \alpha\} + \min\{\beta, 1 - \beta\})}.
\end{align*}
(the last inequality is due to the assumption $\mu\ge \sigma\alpha + \tau/2$).

\paragraph{Acknowledgments.}
The work of the  first and third authors was done 
 within the framework of the HSE University Basic Research Program. The second author is supported by the EPSRC grant EP/P020992/1 (Solving Parity Games in Theory and Practice).
The third author was funded partially by the RFBR grant 20-01-00645.

\bibliography{extremal-combinatorics}

\end{document}